\title[Estimating global subgraph counts by sampling]
{A note on estimating global subgraph counts by sampling}
\date{20 October, 2022}
\author{Svante Janson}
\thanks{SJ Supported by the Knut and Alice Wallenberg Foundation}
\address{Department of Mathematics, Uppsala University, PO Box 480,
SE-751~06 Uppsala, Sweden}
\email{svante.janson@math.uu.se}
\newcommand\urladdrx[1]{{\urladdr{\def~{{\tiny$\sim$}}#1}}}
\author{Valentas Kurauskas}
\address{Faculty of Mathematics and Informatics, Vilnius University, Akademijos 4, 08412, Vilnius, Lithuania}
\email{valentas@gmail.com}
\subjclass[2020]{} 
\numberwithin{equation}{section}
\renewcommand\le{\leqslant}
\renewcommand\ge{\geqslant}
\theoremstyle{plain}
\newtheorem{theorem}{Theorem}
\newtheorem{corollary}[theorem]{Corollary}
\theoremstyle{definition}
\newtheorem{exampleqqq}[theorem]{Example}
\newtheorem{remarkqqq}[theorem]{Remark}
\theoremstyle{remark}
\newenvironment{romenumerate}[1][-10pt]{
\addtolength{\leftmargini}{#1}\begin{enumerate}
 }{\end{enumerate}}
\newcounter{oldenumi}
{\setcounter{oldenumi}{\value{enumi}}
\begin{romenumerate} \setcounter{enumi}{\value{oldenumi}}}
{\end{romenumerate}}
\newcounter{thmenumerate}
\newcounter{xenumerate}   
\newcounter{pfcases}
\newcommand\pfcase[1]{\par\smallskip\noindent\refstepcounter{pfcases}%
\emph{Case \arabic{pfcases}: #1} \noindent}
\newcommand{\refT}[1]{Theorem~\ref{#1}}
\newcommand\marginal[1]{\marginpar[\raggedleft{\tiny #1}]{\raggedright{\tiny#1}}}
\newcommand\REM[1]{{\raggedright\texttt{[#1]}\par\marginal{XXX}}}
\newcommand\XREM[1]{\relax}
\xdef\klockan{\the\count1.0\the\count255}
\xdef\klockan{\the\count1.\the\count255}\fi
\newcommand\set[1]{\ensuremath{\{#1\}}}
\newcommand\bigpar[1]{\bigl(#1\bigr)}
\newcommand\Bigpar[1]{\Bigl(#1\Bigr)}
\newcommand\lrpar[1]{\left(#1\right)}
\newcommand\bigsqpar[1]{\bigl[#1\bigr]}
\newcommand\xsqpar[1]{\,[#1]}
\newcommand\bigabs[1]{\bigl\lvert#1\bigr\rvert}
\newcommand\biggabs[1]{\biggl\lvert#1\biggr\rvert}
\def\rompar(#1){\textup(#1\textup)}    
\def\xexp(#1){e^{#1}}
\newcommand\ntoo{\ensuremath{{n\to\infty}}}
\newcommand{\tend}{\longrightarrow}
\newcommand\dto{\overset{\mathrm{d}}{\tend}}
\newcounter{CC}
\newcounter{cc}
\newcommand\E{\operatorname{\mathbb E{}}}
\renewcommand\P{\operatorname{\mathbb P{}}}
\newcommand\ga{\alpha}
\newcommand\gD{\Delta}
\newcommand\gf{\varphi}
\newcommand\gl{\lambda}
\newcommand\gs{\sigma}
\renewcommand\phi{\xxx}  
\newcommand\indicx[1]{\boldsymbol1_{#1}} 
\newcommand\indic{\indicx}
\newcommand\indicq[1]{\indicx{\set{#1}}}
\newcommand\dd{\,\mathrm{d}}
\newcommand\Hom{\operatorname{Hom}}
\newcommand\Emb{\operatorname{Emb}}
\newcommand\homD{\hom_{\gD}}
\newcommand\homDa{\hom_{\gD,\bga}}
\newcommand\indicD[1]{\indicx{d_{#1}\ge\gD}}
\newcommand\indicDv{\indicD{v}}
\newcommand\bga{\boldsymbol{\ga}}
\newcommand\hX{\widehat{X}}
\newcommand\bX{\bar{X}}
\newcommand\vv{V}
\newcommand{\Holder}{H\"older}
\DeclareMathOperator{\emb}{emb}
\begin{document}

\begin{abstract} 
We give a simple proof of a generalization of an inequality for homomorphism
counts by Sidorenko \cite{Sidorenko}. A special case of our inequality says
that if $d_v$ denotes the degree of a vertex $v$ in a graph $G$ and $\homD(H, G)$ denotes
the number of homomorphisms from a connected graph $H$ on $h$ vertices to $G$ 
which map a particular vertex of $H$ to a vertex $v$ in $G$ with $d_v \ge \gD$, then
\begin{align*}
    \homD(H,G) \le \sum_{v\in G} d_v^{h-1}\indicDv.
\end{align*}
We use this inequality to study the minimum 
sample size
needed
to estimate the number of copies of $H$ in $G$ by sampling vertices of $G$
at random.
\end{abstract}

\maketitle

\section{Introduction}\label{S:intro}

This paper consists of two main parts. In Section~\ref{S:ineq} we present a simple proof of an inequality that generalizes Sidorenko's inequality on homomorphism counts. Our motivation for this result comes from an application to estimating global subgraph counts by sampling, which is discussed in Section~\ref{S:appl}.

\subsection*{Acknowledgement}
Much of this research was done during the
28th Nordic Congress of Mathematicians at
Aalto University, Helsinki, Finland in August 2022,
where the authors met. We thank the organizers for making this possible.

\section{The inequality}\label{S:ineq}

Let $H$ and $G$ be graphs.
We let $\Hom(H,G)$ denote the set of homomorphisms $H\to G$,
and $\hom(H,G):=|\Hom(H,G)|$ the number of them.

Sidorenko~\cite{Sidorenko} proved
the following theorem:
\begin{theorem}[Sidorenko, 1994]\label{thm.sidorenko}
For any connected graph $H$ on $h\ge1$ vertices and any graph $G$,
    \begin{align}
        \hom(H, G) \le \hom(K_{1, {h-1}}, G).
    \end{align}
\end{theorem}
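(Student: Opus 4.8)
The plan is to deduce \refT{thm.sidorenko} from the generalization stated in the abstract, which I would prove by induction. First note the identity $\hom(K_{1,h-1},G)=\sum_{v\in V(G)}d_v^{h-1}$: a homomorphism $K_{1,h-1}\to G$ is specified by sending the centre to an arbitrary vertex $v$ and each of the $h-1$ leaves to an arbitrary neighbour of $v$. Thus the theorem is equivalent to the bound $\hom(H,G)\le\sum_v d_v^{h-1}$. Next I would reduce to the case that $H$ is a tree: fix a spanning tree $T$ of the connected graph $H$; since $E(T)\subseteq E(H)$ every homomorphism $H\to G$ is also a homomorphism $T\to G$, so $\hom(H,G)\le\hom(T,G)$, and it suffices to prove $\hom(T,G)\le\sum_v d_v^{h-1}$ for trees $T$.

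For trees I would prove, by induction on $h$, the following refined inequality (this is essentially the generalization announced in the abstract): for every tree $T$ on $h$ vertices, every vertex $w$ of $T$, every graph $G$, and every $\Delta\ge 0$,
\[
 \sum_{v:\,d_v\ge\Delta}\hom_w(T,G;v)\ \le\ \sum_{v:\,d_v\ge\Delta}d_v^{h-1},
\]
where $\hom_w(T,G;v)$ denotes the number of homomorphisms $T\to G$ with $w\mapsto v$. Taking $\Delta=0$ removes the restriction on $v$ and makes the left-hand side $\hom(T,G)$, which is what is needed. The base case $h=1$ is immediate, both sides being $|\{v:d_v\ge\Delta\}|$.

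For the inductive step the easy case is when $w$ has a leaf neighbour $\ell$ in $T$. Then $\hom_w(T,G;v)=d_v\cdot\hom_w(T-\ell,G;v)$, and using the ``layer-cake'' identity $d_v\,\mathbf 1[d_v\ge\Delta]=\sum_{t\ge 1}\mathbf 1[d_v\ge\max(t,\Delta)]$ for the integer $d_v$ and interchanging summations, the left-hand side equals $\sum_{t\ge 1}\sum_{v:\,d_v\ge\max(t,\Delta)}\hom_w(T-\ell,G;v)$, which by the induction hypothesis (applied to the $(h-1)$-vertex tree $T-\ell$, distinguished vertex $w$, threshold $\max(t,\Delta)$) is at most $\sum_{t\ge 1}\sum_{v:\,d_v\ge\max(t,\Delta)}d_v^{h-2}=\sum_v d_v^{h-2}\cdot d_v\mathbf 1[d_v\ge\Delta]=\sum_{v:\,d_v\ge\Delta}d_v^{h-1}$. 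The same computation strips off all leaf neighbours of $w$ simultaneously, so one may assume that $w$ has no leaf neighbour.

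I expect the remaining case, where $w$ has no leaf neighbour, to be the main obstacle. If $w$ has degree $\ge 2$ in $T$, then $\hom_w(T,G;v)$ factors as a product $\prod_i\hom_w(T_i^+,G;v)$ over the branches $T_i^+$ at $w$ (each a strictly smaller tree in which $w$ is a leaf); and if $w$ has degree $1$ then $\hom_w(T,G;v)=\sum_{u\sim v}\hom_{w'}(T-w,G;u)$ for the neighbour $w'$ of $w$. Either way one must reassemble the bound for $T$ from the inductive bounds for the smaller trees, and here the refined (tail-sum) form of the inequality is genuinely necessary: the pointwise bound $\hom_w(T,G;v)\le d_v^{h-1}$ is false in general (e.g.\ for $T$ a path on three vertices with $w$ an endpoint and $G$ a star, taking $v$ a leaf of $G$), while estimating each branch separately, or applying \Holder, tends either to destroy the restriction $d_v\ge\Delta$ on the right-hand side or to be incompatible with the order ``$f\le g$ on all tails $\{v:d_v\ge t\}$''. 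Handling this configuration — presumably via a more careful choice of which vertex to peel off, or a finer inductive invariant recording the interaction between $w$ and the rest of $T$ — is the step I expect to require the real work; granting it, \refT{thm.sidorenko} follows immediately.
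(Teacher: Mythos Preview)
Your reduction to trees and your tail-sum reformulation (the inequality with a threshold $\gD$ at the distinguished vertex) are exactly the right starting points, and your layer-cake argument for stripping off a leaf \emph{adjacent to the root} $w$ is correct. But the proof is genuinely incomplete: you explicitly leave open the case where $w$ has no leaf neighbour, and that case is unavoidable (take $T$ a path on five vertices with $w$ the centre). Your own diagnosis --- that a finer inductive invariant is needed --- is accurate; what is missing is the invariant itself.

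The paper supplies precisely that invariant. Instead of carrying only a threshold at the root, it attaches a non-negative real weight $\ga_u$ to \emph{every} vertex $u$ of $T$ and proves
\[
\sum_{\gf\in\Hom(T,G)}\indicq{d_{\gf(o)}\ge\gD}\prod_{u\in T}d_{\gf(u)}^{\ga_u}
\ \le\ \sum_{v\in G} d_v^{\,h-1+|\bga|}\,\indicDv .
\]
With weights allowed at every vertex, one may delete \emph{any} non-root leaf $\ell$ (not only one adjacent to $o$): removing $\ell$ contributes a factor $d_{\gf(v)}$ at the neighbour $v$ of $\ell$, which is absorbed by incrementing $\ga_v$ by $1$. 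Since every tree on $\ge 2$ vertices has a non-root leaf, this step never stalls. The cost is that the induction must now also reduce the number of non-root vertices carrying positive weight; this is achieved in two further moves. If two non-root vertices both have positive weight, a single application of \Holder's inequality (applied away from the root, so the indicator $\indicq{d_{\gf(o)}\ge\gD}$ is untouched) shifts all their weight onto one of them. In the residual configuration --- $T$ a path, root $o$ at one end, the only positive non-root weight at the far endpoint $v$ --- a Chebyshev-sum/correlation inequality $(f(x)-f(y))(g(x)-g(y))\ge 0$ with $f(x)=d_x^{\ga_o}\indic{d_x\ge\gD}$ and $g(x)=d_x^{\ga_v}$, summed over $\Hom(T,G)$ and using the end-to-end symmetry of the path, transfers the last weight to the root. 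Your layer-cake trick is essentially the special case of the leaf-removal step when the leaf happens to be adjacent to $o$; the idea you are missing is to let the resulting weight land at an interior vertex and then use \Holder{} and the correlation argument to migrate it back to the root.
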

In fact, Sidorenko \cite{Sidorenko} showed this
for trees $H$, but this is immediately equivalent to our formulation, since 
$\hom(H,G)\le \hom (T,G)$ for any spanning tree $T$ of $H$.

If $H$ is a rooted graph, with root $o$, and 
 $\gD\ge0$, we also define
 \begin{align}\label{homD}
   \homD(H,G):=\bigabs{\set{\gf\in\Hom(H,G):d_{\gf(o)}\ge\gD}},
 \end{align}
where $d_v$ here and below denotes the degree of a vertex $v$ in a graph.
(The graph will be clear from the context; in this section it is always $G$.)
We show the following extension of \refT{thm.sidorenko}.

\begin{theorem} \label{thm.homD}
For any connected rooted
graph $H$ on $h\ge1$ vertices, any graph $G$,
and any  $\gD\ge0$,
  \begin{align}\label{thomD}
    \homD(H,G) \le \sum_{v\in G} d_v^{h-1}\indicDv.
  \end{align}
\end{theorem}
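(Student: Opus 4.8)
First reduce to trees: pick a spanning tree $T$ of $H$ and root it at $o$. Every homomorphism $H\to G$ is also a homomorphism $T\to G$ (with the same image of $o$), so $\homD(H,G)\le\homD(T,G)$, while the right-hand side of \eqref{thomD} depends only on $h=|V(H)|=|V(T)|$; hence it suffices to prove \eqref{thomD} when $H=T$ is a tree. Two remarks: since all degrees are integers we may assume $\gD\in\bbZgeo$; and the right-hand side of \eqref{thomD} is exactly $\homD(K_{1,h-1},G)$ when $K_{1,h-1}$ is rooted at its centre, so the theorem asserts that the centre-rooted star is extremal among connected rooted graphs on $h$ vertices.

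For a rooted tree $(T,o)$ let $f_T(v)$ denote the number of homomorphisms $T\to G$ sending $o$ to $v$, so $\homD(T,G)=\sum_{v}f_T(v)\indicDv$. A layer-cake argument shows that the family of inequalities \eqref{thomD}, over all $\gD\in\bbZgeo$, is equivalent to the single statement that $\sum_v f_T(v)\psi(d_v)\le\sum_v d_v^{h-1}\psi(d_v)$ for every non-decreasing $\psi\colon\bbZgeo\to\ooo$. I would prove the slightly stronger, more symmetric claim in which each vertex of $T$ carries its own monotone weight: for every rooted tree $(T,o)$ on $h$ vertices and all non-decreasing $g_u\colon\bbZgeo\to\ooo$, $u\in V(T)$,
\begin{equation*}
  \sum_{\gf\in\Hom(T,G)}\ \prod_{u\in V(T)}g_u\bigpar{d_{\gf(u)}}\ \le\ \sum_{v\in V(G)}\Bigpar{\prod_{u\in V(T)}g_u(d_v)}d_v^{h-1},\tag{$\ast$}
\end{equation*}
the case in hand being $g_o=\psi$ and $g_u\equiv1$ for $u\ne o$.

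Prove $(\ast)$ by induction on $h$. For $h=1$ it is an equality. For $h=2$, with $T$ the single edge $\{x,y\}$, expanding both sides as sums over the edges of $G$ reduces $(\ast)$ to
\begin{equation*}
  \bigpar{g_x(d_v)-g_x(d_w)}\bigpar{g_y(d_v)-g_y(d_w)}\ge0\qquad\text{for every edge }\{v,w\}\text{ of }G,
\end{equation*}
a two-term rearrangement (Chebyshev) inequality, valid because $g_x$ and $g_y$ are monotone in the same direction. For $h\ge3$, delete a leaf $\ell\ne o$ with neighbour $q$; summing the left-hand side of $(\ast)$ over the image of $\ell$ replaces the weight at $q$ by $u\mapsto g_q(d_u)\sum_{w\sim u}g_\ell(d_w)$ (where $w\sim u$ denotes adjacency in $G$). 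If $g_\ell\equiv1$ this equals $u\mapsto g_q(d_u)\,d_u$, which is again non-decreasing, and the induction hypothesis applied to $T-\ell$ closes the step; the same works if $q=o$, absorbing the extra factor $d$ into $g_o$. In particular this settles the case where the root has a pendant neighbour, and anything that can be reduced to a single vertex by successively deleting leaves of trivial weight (e.g.\ stars rooted at the centre).

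The remaining case — deleting a leaf $\ell$ that carries a \emph{non-trivial} weight $g_\ell$ — is where I expect the real work to lie. The weight it leaves at $q$, namely $u\mapsto\sum_{w\sim u}g_\ell(d_w)$, need \emph{not} be monotone in $d_u$ (a low-degree vertex all of whose neighbours have huge degree can outscore a high-degree vertex all of whose neighbours are leaves), so $(\ast)$ in this generality does not close under naive leaf-deletion. My plan would be to enrich the inductive hypothesis by tracking an auxiliary distinguished vertex carrying a pure power weight $d^{k}$, and to choose the order of deletions so that this power weight is always absorbed — into $g_o$ as $\psi(m)m^{k}$ when the deleted leaf is at the root, merged into a neighbouring power weight, or pushed into the exponent via $\sum_{w\sim u}1=d_u$ when the deleted leaf is unweighted — thereby reducing an arbitrary tree to this two-pointed configuration; the non-monotone residual terms should then be controlled by the edge-wise rearrangement inequality of the $h=2$ case together with the convexity of $x\mapsto x^{k}$. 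Arranging this so the process terminates — so that one never has to delete a leaf whose accumulated weight is a genuine sum of powers of neighbour-degrees — is the technical crux.
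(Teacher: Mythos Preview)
Your proposal is explicitly incomplete: you flag the ``remaining case'' as the technical crux and offer only a plan. That plan, as stated, does not close. You propose to maintain at most one auxiliary non-root vertex carrying a power weight $d^{k}$, but leaf deletion does not preserve this invariant: if every trivially weighted leaf has a neighbour distinct from both $o$ and the current power-weighted vertex, deleting one creates a \emph{second} power-weighted vertex, and no deletion order avoids this in general. A minimal example is the tree with edges $oa$, $ab$, $ac$, $cd$: after deleting the leaves $b$ and $d$ (in either order) both $a$ and $c$ carry exponent $1$, and no trivially weighted non-root leaf remains.

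The paper's argument is structurally very close to yours. It strengthens the statement by attaching a power weight $d^{\alpha_{w}}$ to \emph{every} vertex (retaining the root indicator), and runs a double induction on $h$ and on the number of non-root vertices with $\alpha_{w}>0$. Your leaf-deletion step is their Case~1 (delete a non-root leaf with $\alpha=0$, increase the neighbour's exponent by $1$); your $h=2$ rearrangement is their Case~3, which in the paper handles any \emph{path} with weights only at its two endpoints, not just a single edge. The ingredient you are missing is their Case~2: when two distinct non-root vertices $v,w$ carry $\alpha_{v},\alpha_{w}>0$, H\"older's inequality with conjugate exponents $\tfrac{\alpha_{v}+\alpha_{w}}{\alpha_{v}}$, $\tfrac{\alpha_{v}+\alpha_{w}}{\alpha_{w}}$ bounds the weighted count by the geometric mean of the two instances in which one of $\alpha_{v},\alpha_{w}$ is set to $0$ and the other to $\alpha_{v}+\alpha_{w}$, thereby decreasing the number of positive non-root weights. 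This merging step is exactly what your plan lacks, and it is why power weights --- rather than your general monotone $g_{u}$ --- are the right carrier for the induction: H\"older exploits $d_{x}^{\alpha_{v}}\cdot d_{x}^{\alpha_{w}}=d_{x}^{\alpha_{v}+\alpha_{w}}$, which has no analogue for arbitrary monotone functions.
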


Note that Sidorenko's theorem is the special case $\gD=0$
of our theorem.

We will use induction to prove a more general statement.
Let $\bga:=(\ga_w)_{w\in H}$ be a vector of non-negative real numbers
$\ga_w$
indexed by the vertices in $H$, and let
 \begin{align}\label{homDa}
   \homDa(H,G):=
\sum_{\gf\in\Hom(H,G)}\indicq{d_{\gf(o)}\ge\gD}\prod_{w\in H}d_{\gf(w)}^{\ga_w}.
 \end{align}
In particular, taking all $\ga_w=0$, we have $\hom_{\gD,\bf{0}}(H,G)=\homD(H,G)$.
Hence, \refT{thm.homD} is a special case of the following result.

\begin{theorem} \label{thm.homDa}
For any connected rooted graph $H$ on $h\ge1$ vertices, any graph $G$,
any  $\gD\ge0$, and any non-negative vector $\bga=(\ga_w)_{w\in H}$, 
\begin{align}
  \label{aalto}
    \homDa(H,G) \le \sum_{v\in G} d_v^{h-1+|\bga|}\indicDv,
\end{align}
where
\begin{align}\label{bga}
  |\bga|:=\sum_{w\in H} \ga_w.
\end{align}
\end{theorem}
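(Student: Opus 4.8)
The plan is to prove \eqref{aalto} by induction on $h$, the number of vertices of $H$, with the vector $\bga$ allowed to vary. The base case $h=1$ is immediate: then $H$ is a single rooted vertex $o$, a homomorphism $\gf$ is just a choice of $\gf(o)=v\in G$, and $\homDa(H,G)=\sum_{v\in G}d_v^{\ga_o}\indicDv=\sum_{v\in G}d_v^{0+|\bga|}\indicDv$ since $h-1=0$ and $|\bga|=\ga_o$.

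For the inductive step, take $H$ connected on $h\ge 2$ vertices rooted at $o$. The standard trick is to pick a leaf of a spanning tree of $H$ that is \emph{not} the root: since $H$ is connected it has a spanning tree, and any tree on $\ge 2$ vertices has a leaf $\ne o$; call it $u$, with unique neighbour $u'$ in the tree. I would set $H':=H-u$, still connected (it contains the spanning tree minus a leaf), rooted at the same vertex $o$, on $h-1$ vertices. A homomorphism $\gf\colon H\to G$ restricts to a homomorphism $\gf'\colon H'\to G$, and conversely, given $\gf'$, the extensions $\gf$ correspond to choices of $\gf(u)$ among the neighbours in $G$ of $\gf'$ applied to \emph{all} tree-neighbours of $u$ — but $u$ has a single tree-neighbour $u'$, so (dropping any non-tree edges at $u$, which only makes the count larger) the number of valid values for $\gf(u)$ is at most $d_{\gf'(u')}$. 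Summing the weight: each extension contributes $d_{\gf(u)}^{\ga_u}$, and crucially $d_{\gf(u)}\le d_{\gf'(u')}$ is \emph{false} in general, so instead I use the cruder bound obtained by summing $d_{\gf(u)}^{\ga_u}$ over the $\le d_{\gf'(u')}$ neighbours of $\gf'(u')$; since each such neighbour $w$ satisfies $d_w\le d_{\gf'(u')}$ (no — also false). Let me restate: the clean inequality is $\sum_{w\sim \gf'(u')} d_w^{\ga_u}\le d_{\gf'(u')}\cdot\max$, which fails, so the right move is $\sum_{w\sim x} d_w^{\ga_u}$: I would instead \emph{not} bound this by a power of $d_x$ directly, but rather fold the exponent $\ga_u$ onto $u'$ by the following: summing over the at most $d_x$ neighbours $w$ of $x$, we have $\sum_{w\sim x}d_w^{\ga_u}$; applying Sidorenko-type reasoning is not needed — instead, the correct device (this is the crux) is that $\sum_{w\sim x}d_w^{\ga_u}$ is exactly $\hom_{\ga_u}$ of an edge rooted suitably, and more simply one uses $\sum_{w\sim x}d_w^{\ga_u}\le \bigpar{\sum_{w\sim x}d_w}^{\ga_u}d_x^{1-\ga_u}$ when $\ga_u\le 1$ by power-mean/concavity, but for $\ga_u>1$ one needs the reverse. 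I expect the paper actually handles all $\ga_u\ge 0$ uniformly by the bound $\sum_{w\sim x} d_w^{\ga_u}\le d_x\cdot d_x^{\ga_u}$ is wrong unless all neighbours have degree $\le d_x$; so the genuine key step must be subtler — one transfers weight so that $H'$ gets the \emph{modified} vector $\bga'$ with $\ga'_{u'}:=\ga_{u'}+\ga_u+1$ if the true inequality $\sum_{w\sim x}d_w^{\ga_u}\le d_x^{\ga_u+1}$ held; but that again needs all $d_w\le d_x$.

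The honest resolution, which I would pursue, is: the inequality $\sum_{w\sim x}d_w^{\beta}\le d_x^{\beta+1}$ does \emph{not} hold pointwise, but after summing over $\gf'$ one does not need it pointwise. Instead I would keep $\sum_{w\sim\gf'(u')}d_w^{\ga_u}$ intact and recognize the resulting quantity as $\homDa$ of a \emph{larger but still smaller-vertex} configuration only if $u'$ could absorb it. So the genuinely correct plan is to induct while allowing $\bga$ to have entries summing to anything, and at the leaf-removal step replace $\bga$ on $H'$ by $\bga'$ where $\ga'_w=\ga_w$ for $w\ne u'$ and $\ga'_{u'}=\ga_{u'}$, and separately bound the factor $\sum_{w\sim x}d_w^{\ga_u}$: here one uses that for any $x$ and any $\beta\ge 0$,
\begin{align*}
\sum_{w\sim x} d_w^{\ga_u} \le d_x^{\ga_u}\sum_{w\sim x}1 \cdot \frac{d_w^{\ga_u}}{d_x^{\ga_u}}
\end{align*}
is circular — so instead the paper must use the \CSineq{} or the simple observation $d_w\le$ (number of edges) is too weak. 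I will therefore commit to the approach that I believe is intended: \textbf{strip a leaf $u$ and use $\sum_{w\sim x}d_w^{\ga_u}\le\bigpar{\sum_{w\sim x}d_w^{1}}^{?}$} — no. Let me give the clean plan: since for every vertex $w$, $d_w\le d_{\max}$ is useless, but $\sum_{w\sim x}d_w^{\ga_u}$, viewed as walks, equals the number of homomorphisms of a path-like gadget, and by Sidorenko (\refT{thm.sidorenko}) applied cleverly... The main obstacle, and the part I would spend the most effort on, is exactly this: \emph{correctly accounting for the degree-weight $d_{\gf(u)}^{\ga_u}$ of the removed leaf} so that it can be reabsorbed into a valid instance of the inductive hypothesis on $H'$ with an augmented exponent vector. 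Once that transfer lemma — presumably $\sum_{w\sim x}d_w^{\ga_u}\le d_x^{\ga_u+1}$ replaced by a summed/averaged version, or a direct induction that avoids it by instead routing the new leaf's exponent into the $h-1+|\bga|$ on the right via the identity $(h-1+|\bga|)\mapsto(h'-1+|\bga'|)$ with $h'=h-1$, $|\bga'|=|\bga|+\ga_u+1$ — is in place, the inductive step closes: apply the hypothesis to $H'$ with $\bga'$ to get $\sum_{v}d_v^{(h-1)-1+|\bga|+\ga_u+1}\indicDv=\sum_v d_v^{h-1+|\bga|}\indicDv$, matching \eqref{aalto}, and the $\gD$-constraint on $d_{\gf(o)}=d_{\gf'(o)}$ is preserved verbatim since $o\in H'$.

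In summary: induct on $|V(H)|$; base case $h=1$ trivial; inductive step removes a non-root leaf $u$ of a spanning tree with tree-neighbour $u'$, bounds the number and degree-weight of extensions of $\gf'\colon H'\to G$ by a factor accounted to $u'$ (raising $\ga_{u'}$), and invokes the inductive hypothesis on $(H',\bga')$ with $|\bga'|=|\bga|+\ga_u+1$; the arithmetic $h'-1+|\bga'| = h-1+|\bga|$ makes the right-hand side match, and the indicator on the root is untouched. The delicate point — the one I would verify most carefully — is the pointwise/summed inequality controlling $\sum_{w\sim\gf'(u')} d_w^{\ga_u}$ by $d_{\gf'(u')}^{\ga_u+1}$ (or its correct averaged surrogate), since this is where non-negativity of $\ga_u$ and the leaf structure are really used, and where a naive bound silently fails unless phrased with care.
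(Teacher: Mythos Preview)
Your proposal has a genuine gap, and you have in fact put your finger on it yourself without resolving it. The inductive step you commit to in the summary requires the pointwise inequality
\[
\sum_{w\sim x} d_w^{\ga_u} \le d_x^{\ga_u+1},
\]
(or some ``averaged surrogate'' you never specify) in order to transfer the weight $\ga_u$ from the removed leaf $u$ to its neighbour $u'$. As you correctly observe several times in your own write-up, this inequality is \emph{false} when $\ga_u>0$: a low-degree vertex can have high-degree neighbours. No amount of summing over $\gf'$ repairs this, because the right-hand side $\sum_v d_v^{h-1+|\bga|}\indicDv$ is tight on stars, and a star centred at a high-degree vertex adjacent to your $x$ already breaks the bound. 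So the plan ``remove an arbitrary non-root leaf and fold its exponent into the neighbour'' simply does not close when the leaf carries positive weight.

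The paper's fix is to run a \emph{double} induction: on $h$ and on the number of non-root vertices with positive weight. Leaf removal (your step) is only performed when the leaf has weight $\ga_u=0$, in which case the troublesome sum collapses to $\sum_{w\sim x}1=d_x$ and the transfer is exact. To reach that situation, two further devices are used. If there are at least two non-root vertices with positive weight, a \Holder{} step merges their weights onto one of them (reducing the secondary induction parameter). If exactly one non-root vertex carries positive weight and it is the unique non-root leaf, then $H$ is a path from $o$ to that leaf, and a Chebyshev-type rearrangement argument (both $d_x^{\ga_o}\indic{d_x\ge\gD}$ and $d_x^{\ga_v}$ are increasing in $d_x$, and the path is symmetric under swapping its endpoints) moves that weight to the root. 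After these reductions every non-root leaf has weight zero and your leaf-removal step applies cleanly. The missing idea in your proposal is precisely this reduction to zero-weight leaves; without it the induction does not go through.
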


\begin{proof}
To prove \eqref{aalto}, we use a double induction over the number of
vertices $h$ in $H$ and the number of non-root vertices $w$ such that the
weight $\ga_w>0$.
Hence, we may assume that \eqref{aalto} holds if we replace the
pair $(H,\bga)$ by a pair $(H',\bga')$ such that either
\begin{romenumerate}
    
\item 
$H'$  has fewer vertices than $H$, or
\item 
$H'$ has the same number of vertices as $H$, but there are
fewer non-root vertices $w\in H'$ with $\ga'_w>0$ than non-root $w\in H$
with $\ga_w>0$.  
\end{romenumerate}

The base case $h=1$ is trivial, since in this case \eqref{aalto} is an
identity.
To prove the induction step, we consider three cases separately.
\pfcase{$H$ has a leaf $w\neq o$ with $\ga_w=0$.}
Let $v$ be the unique neighbour of $w$ in $H$. 
Define $H':=H\setminus\set{w}$, and let $\ga'_v:=\ga_v+1$, and
$\ga'_u:=\ga_u$ for all other $u\in H'$.
Then $\homDa(H,G)=\hom_{\gD,\bga'}(H',G)$, and thus \eqref{aalto} follows by
  the induction hypothesis, since 
$H'$  has 
one vertex less
than $H$.

\pfcase{$H$ has (at least) two (distinct) non-roots $v$ and $w$ with
  $\ga_v,\ga_w>0$.} 
Here we use \Holder's inequality, 
in a way that is essentially the same as in \citet{Sidorenko}
(although he does it in a more general way).

By decomposing the sum in \eqref{homDa} according to the values of $\gf(v)$
and $\gf(w)$, we obtain
\begin{align}\label{magnus}
  \homDa(H,G) = \sum_{x,y\in G}\mu_{x,y} d_x^{\ga_v}d_y^{\ga_w},
\end{align}
for some numbers $\mu_{x,y}\ge0$ that do not depend on $\ga_v$ and $\ga_w$.
We regard the  numbers $\mu_{x,y}$ as a measure $\mu$ on the finite set
$V(G)\times V(G)$, and rewrite \eqref{magnus} as
\begin{align}
  \homDa(H,G)=\iint_{V(G)\times V(G)}  d_x^{\ga_v}d_y^{\ga_w} \dd\mu(x,y).
\end{align}
\Holder's inequality now yields
\begin{align}
&  \homDa(H,G)
\notag\\&\quad\le
\lrpar{\iint_{V(G)\times V(G)}  d_x^{\ga_v+\ga_w}
    \dd\mu(x,y)}^{\frac {\ga_v} {\ga_v+\ga_w}}
\lrpar{\iint_{V(G)\times V(G)}  d_y^{\ga_v+\ga_w}
    \dd\mu(x,y)}^{\frac {\ga_w} {\ga_v+\ga_w}}
\notag\\&\quad
    =\hom_{\gD,\bga'}(H,G)^{\frac {\ga_v} {\ga_v+\ga_w}}
    \hom_{\gD,\bga''}(H,G)^{\frac {\ga_w} {\ga_v+\ga_w}},
\end{align}
where 
\begin{align}
  \ga'_v&:=\ga_v+\ga_w, & \ga''_v&:=0,&&
\\
  \ga'_w&:=0, & \ga''_w&:=\ga_v+\ga_w,&&
\\
    \ga'_u&:=\ga''_u:=\ga_u \smash{\quad\text{for $u\not \in \{v,w\}$}}.
\end{align}
Hence \eqref{aalto} follows from the induction hypothesis, since both
$\bga'$ and $\bga''$ have one less non-root vertex with positive weight than
$\bga$. 

\pfcase{The remaining case.}
If none of the cases above applies, then
every non-root leaf has positive weight, and there is at most one non-root
vertex with positive weight. In particular, there is at most one non-root leaf.
If also $|V(H)|\ge2$, 
then $H$ must have exactly one non-root leaf, say $v$,
and thus $H$ is a path with end vertices $o$ and $v$. 
Furthermore, only $v$ and (possibly) the root  $o$ can have positive weight.
Thus
\begin{align}
  \homDa(H,G)=
\sum_{\gf\in\Hom(H,G)}\indicq{d_{\gf(o)}\ge\gD} d_{\gf(o)}^{\ga_o}d_{\gf(v)}^{\ga_v}.
\end{align}

In this case, we use a variant of an argument 
that has been used to show other inequalities
(see, e.g.,\ \cite[Theorems 43 and 236]{HLP} and \cite[Theorem 2.4]{Grimmett}).
We write, for $x\in G$,
\begin{align}
  f(x):=d_{x}^{\ga_o}\indic{d_{x}\ge\gD},
&&&
g(x):=
d_{x}^{\ga_v}.
\end{align}
Then both $f(x)$ and $g(x)$ are (weakly) increasing functions of $d_x$, and
thus, for all $x,y\in G$,
\begin{align}
  \bigpar{f(x)-f(y)}\bigpar{g(x)-g(y)}\ge0.
\end{align}
Consequently, using also the symmetry of $H$ interchanging $o$ and $v$,
\begin{align}
  0 
&\le 
\sum_{\gf\in\Hom(H,G)}\bigpar{f(\gf(o))-f(\gf(v))}\bigpar{g(\gf(o))-g(\gf(v))}
\notag\\&
=2\sum_{\gf\in\Hom(H,G)}f(\gf(o))g(\gf(o))
-2\sum_{\gf\in\Hom(H,G)}f(\gf(o))g(\gf(v))
\notag\\&
=2\hom_{\gD,\bga'}(H,G)-2\homDa(H,G),
\end{align}
where 
\begin{align}
  \ga'_o&:=\ga_o+\ga_v,
\\
\ga'_w&:=0\quad\text{for $w\neq o$}.
\end{align}
Thus 
$\homDa(H,G)\le\hom_{\gD,\bga'}(H,G)$, and thus \eqref{aalto} follows
by induction, 
since $\bga'$ has 
one less non-root vertex with positive weight than $\bga$.
\end{proof}

\begin{proof}[Proof of \refT{thm.homD}]
    As mentioned above, this is the special case $\bga=0$
of \refT{thm.homDa}.
\end{proof}

\section{Applications}  
\label{S:appl}

Let now $H$ be a fixed connected graph on $h$ vertices and $G$ an arbitrary (large) graph on $n$ vertices.
Let $\Emb(H,G)$ denote the set of embeddings (injective homomorphisms)
$H \to G$; we will be interested in estimating the number $\emb(H,G) :=
|\Emb(H,G)|$ by sampling a rather small number of vertices of $G$ and
exploring small neighbourhoods of them.

A similar problem for sequences of graphs with a weak local limit
has been studied in~\cite{kurauskas2015}; 
there a uniform integrability condition 
on the $(h-1)$th power of the
random vertex degree was used.
Uniform integrability of graph degrees or their powers is natural for
sequences of graphs,
and has been used both in theoretical work and in applications, see, e.g.,
\cite{andersson1998, bls2013}.
In our setting, 
we use instead the related 
\eqref{eq.Dhtail} below. The general problem
of estimating small subgraph counts in a given graph
has been considered by many authors for a variety of applications, 
see, e.g., the survey paper~\cite{ribeiro2022}.

To estimate $\emb(H,G)$, we may assume that $H$ is a rooted graph with a root $o$
($o$ can be chosen arbitrarily).
For a vertex $v\in G$, 
let $X(v)=X(H, G, v)$ 
denote the number of embeddings $\sigma \in \Emb(H,G)$ such
that $\sigma(o) = v$.
We may then estimate $\emb(H,G)$ from the numbers $X(v^*_i)$
for some
randomly sampled vertices $v^*_i$ in $G$.
However, since vertices of high
degree in $G$ may give outliers with exceptionally high numbers of such
embeddings, we use truncation in order to obtain our error bounds.

Choose an arbitrary rooted spanning tree $T$ of $H$ 
with the same root $o$.
Say that a vertex $u\in T$ 
is \emph{internal} if
$d^T_u > 1$ or $u=o$ (where $d_u^T$ denotes the degree of $u$ in $T$).
Denote by $i_T$ the
number of internal vertices in $T$.
Choose also a positive integer $\gD$.
For a vertex $v\in G$, 
let $\bX(v) = \bX(H, G, v, T, \gD)$
denote the number of embeddings $\sigma \in \Emb(H,G)$ such
that $\sigma(o) = v$ and $d_{\sigma(u)} < \gD$ for all internal vertices
$u\in T$. 

Let $N\ge1$ and let $v_1^*, \dots, v_N^*$ be drawn from $V(G)$ independently
and uniformly at random. 
Consider the following estimate for $n^{-1} \emb(H, G)$:
\begin{align}
    \hX_N := \frac 1 N \bigpar{\bX(v_1^*) + \dots + \bX(v_N^*)}.
\end{align}
Note that the random variable $\bX(v_1^*)$ is bounded, since for every $v\in
G$,
\begin{align}\label{bdd}
0 \le \bX (v) \le (\gD-1)^{h-1}.   
\end{align}
Its mean can be estimated by $\hX_N$, with an error that can be bounded
using, for example,  Hoeffding's bound, which we do to prove the following
result. 
\begin{theorem}\label{thm.hoeffd}
    Let $H$ be a connected graph on $h\ge1$ vertices with a rooted spanning tree $T$, 
    let $G$ be a graph on $n\ge1$ vertices,
    and
    let $D$ be the degree of a uniformly random vertex in $G$.
    Suppose a positive integer $\gD$
    and a non-negative $\lambda$ satisfy
\begin{equation}\label{eq.Dhtail}
    \E{}\bigsqpar{D^{h-1} \indicx{D \ge \gD}} \le \lambda.
\end{equation}
    Let $s > 0$ and $p \in (0,1]$. If
    \begin{equation}\label{eq.Nhoeffd}
        N \ge \frac {(\gD - 1)^{2(h-1)}} {2 s^2} \ln \frac 2 p,
    \end{equation}
    then
    \begin{align}\label{eq.th}
 \P\bigpar{\hX_N - s \le n^{-1} \emb(H,G) \le \hX_N + s + i_T \lambda } \ge 1-p.
    \end{align}
\end{theorem}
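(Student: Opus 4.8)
The plan is to bound the two sources of error separately: the sampling error in estimating $\E \bX(v_1^*)$ by $\hX_N$, and the systematic bias introduced by the truncation, i.e.\ the gap between $\E \bX(v_1^*)$ and $n^{-1}\emb(H,G)$.

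\textbf{Step 1: Sampling error via Hoeffding.} By \eqref{bdd} each summand $\bX(v_i^*)$ lies in $[0,(\gD-1)^{h-1}]$, and the $v_i^*$ are i.i.d., so Hoeffding's inequality gives
\begin{align*}
  \P\bigpar{\bigabs{\hX_N - \E\bX(v_1^*)} > s} \le 2\exp\Bigpar{-\frac{2Ns^2}{(\gD-1)^{2(h-1)}}}.
\end{align*}
The hypothesis \eqref{eq.Nhoeffd} is exactly what makes the right-hand side $\le p$. So with probability at least $1-p$ we have $\E\bX(v_1^*) - s \le \hX_N \le \E\bX(v_1^*) + s$.

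\textbf{Step 2: The truncation bias.} Write $\mu := \E X(v_1^*) = n^{-1}\sum_{v\in G} X(v) = n^{-1}\emb(H,G)$, where the last equality holds because each embedding $\sigma$ is counted exactly once, at $v=\sigma(o)$. Since $\bX(v) \le X(v)$ pointwise, we get $\E\bX(v_1^*) \le n^{-1}\emb(H,G)$, which handles the lower bound on $n^{-1}\emb(H,G)$ in \eqref{eq.th}. For the upper bound I must show $n^{-1}\emb(H,G) - \E\bX(v_1^*) \le i_T\lambda$, i.e.
\begin{align*}
  \sum_{v\in G}\bigpar{X(v)-\bX(v)} \le i_T\lambda n = i_T \sum_{v\in G} d_v^{h-1}\indicx{d_v\ge\gD},
\end{align*}
using \eqref{eq.Dhtail}. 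Now $X(v)-\bX(v)$ counts embeddings $\sigma$ with $\sigma(o)=v$ such that $d_{\sigma(u)}\ge\gD$ for \emph{some} internal vertex $u\in T$. Union-bounding over the $i_T$ internal vertices $u$, it suffices to show, for each fixed internal $u\in T$, that $\sum_{v\in G} \#\{\sigma\in\Emb(H,G): \sigma(o)=v,\ d_{\sigma(u)}\ge\gD\} \le \sum_{w\in G} d_w^{h-1}\indicx{d_w\ge\gD}$. The left side is at most $\#\{\sigma\in\Emb(H,G): d_{\sigma(u)}\ge\gD\} \le \#\{\varphi\in\Hom(H,G): d_{\varphi(u)}\ge\gD\}$. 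Re-rooting $H$ at $u$ (it is still connected on $h$ vertices), this last quantity is precisely $\homD(H,G)$ for the root $u$, so \refT{thm.homD} gives the bound $\sum_{w\in G} d_w^{h-1}\indicx{d_w\ge\gD}$. Summing over the $i_T$ internal vertices and dividing by $n$ yields $n^{-1}(\emb(H,G) - \sum_v\bX(v)) \le i_T\lambda$, hence $n^{-1}\emb(H,G) \le \E\bX(v_1^*) + i_T\lambda$.

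\textbf{Step 3: Combine.} On the event from Step 1 (probability $\ge 1-p$), chain the inequalities: $n^{-1}\emb(H,G) \ge \E\bX(v_1^*) \ge \hX_N - s$ and $n^{-1}\emb(H,G) \le \E\bX(v_1^*) + i_T\lambda \le \hX_N + s + i_T\lambda$, which is exactly \eqref{eq.th}.

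The routine part is Step 1 (a direct invocation of Hoeffding plus the arithmetic of \eqref{eq.Nhoeffd}). The only genuinely substantive step is the union bound in Step 2 together with the re-rooting trick that lets \refT{thm.homD} be applied with each internal vertex $u$ as the root; the main thing to get right there is that internality of $u$ in $T$ is not actually needed for the homomorphism bound — it is only used to control \emph{which} vertices are truncated and hence to keep the number of terms in the union bound equal to $i_T$ rather than $h$. I expect the subtlety, if any, to be bookkeeping: making sure $X(v)-\bX(v)$ is correctly identified with the union over internal $u$ of the events $\{d_{\sigma(u)}\ge\gD\}$, and that passing from embeddings to homomorphisms (dropping injectivity) only increases the count.
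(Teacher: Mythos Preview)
Your proof is correct and follows essentially the same approach as the paper: first bound the truncation bias $n^{-1}\emb(H,G)-\E\bX(v_1^*)$ by a union bound over the $i_T$ internal vertices combined with \refT{thm.homD} applied after re-rooting, and then control the sampling error by Hoeffding using the bound \eqref{bdd}. The only cosmetic difference is that the paper applies \refT{thm.homD} to the spanning tree $T$ re-rooted at $u$ rather than to $H$ itself, but since both have $h$ vertices the resulting bound is the same.
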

\begin{proof} 
We have 
    \begin{align}\label{magn}
        \emb(H, G) = \sum_{v \in V(G)} \bX(v) 
+ \biggabs{\bigcup_u \{\sigma \in \Emb(H, G): d_{\gs(u)} \ge \gD\}}, 
    \end{align}
    where the union is over the internal vertices of $T$.

    The first term on the right is equal to $n \E \bX(v_1^*)$. 
    By 
the union bound and    
Theorem~\ref{thm.homD} applied to each tree that can be obtained from
    $T$ by rerooting at an internal vertex,
    the second term is at most 
    $i_T n \E\xsqpar{ D^{h-1} \indicx{D \ge \gD}} \le i_T \lambda n$.
    Therefore
    \begin{equation}\label{eq.emb}
        \E \bX(v_1^*) \le  n^{-1} \emb(H, G) \le  \E \bX(v_1^*) + i_T \lambda.
    \end{equation}

    Hoeffding's classical inequality says
    that for a sum of independent random variables $X_1, \dots, X_N$ with
    values in $[0,1]$ and $\mu=N^{-1}\E(X_1+\dots+X_N)$ we have 
    \begin{align}\label{hoeffding}
\P(|N^{-1}(X_1 + \dots + X_N) - \mu| \ge t) \le 2\exp\left(- 2 N t^2\right).
    \end{align}
    The claim 
follows by applying this to the random variables 
$X_i := \bX(v_i^*)/(\gD-1)^{h-1}$ 
    and using (\ref{eq.emb}) (or using $\bX(v_i^*) = 0$ if $\Delta=1$). 
\end{proof}

In particular, we obtain by choosing
$s = \epsilon \E D^{h-1}$ in Theorem~\ref{thm.hoeffd}
the following corollary.
(Choosing $s$ in this way makes sense when
$n^{-1} \emb(H, G)$ is of the same order
as its upper bound $n^{-1} \hom(K_{1,{h-1}}, G) = \E D^{h-1}$, 
which often may be reasonable to expect in practice.)

\begin{corollary} \label{cor.hoeffd}
With notation as above, assume that
\eqref{eq.Dhtail} holds.
If  $\epsilon>0$,  $p\in(0,1]$, and
\begin{equation}\label{eq.hoeffd.practical}
 N \ge \frac {(\gD-1)^{2(h-1)}} { 2 \epsilon^2 (\E D^{h-1})^2} \ln \frac 2 p,
\end{equation}
then 
\begin{align}
    \P\bigpar{\hX_N - \epsilon \E D^{h-1} \le n^{-1} \emb(H,G) \le \hX_N +
  \epsilon \E D^{h-1}  + i_T \lambda} \ge 1-p.
\end{align}
\end{corollary}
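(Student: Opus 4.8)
The plan is to derive Corollary~\ref{cor.hoeffd} directly from Theorem~\ref{thm.hoeffd} by a suitable choice of the free parameter $s$. Concretely, I would set $s := \epsilon\, \E D^{h-1}$ in the statement of Theorem~\ref{thm.hoeffd}. This is a legitimate choice provided $s>0$, which holds as long as $\epsilon>0$ and $\E D^{h-1}>0$; the latter is automatic unless $G$ has no edges at all, and in that edge case $\emb(H,G)=0$ for $h\ge 2$ (and the statement is trivial for $h=1$), so I would dispose of it separately in one line, or simply note that if $\E D^{h-1}=0$ then one may take any $s>0$ and let $s\downarrow 0$, or invoke that $\hX_N = n^{-1}\emb(H,G) = 0$ deterministically.

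Next I would check that the hypothesis on $N$ in the corollary implies the hypothesis \eqref{eq.Nhoeffd} of the theorem for this value of $s$. Substituting $s = \epsilon\,\E D^{h-1}$ into \eqref{eq.Nhoeffd} gives exactly
\begin{align*}
  N \ge \frac{(\gD-1)^{2(h-1)}}{2\epsilon^2 (\E D^{h-1})^2}\ln\frac2p,
\end{align*}
which is precisely \eqref{eq.hoeffd.practical}. So the hypotheses match verbatim, and there is nothing to estimate here — it is a direct substitution.

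Finally, I would invoke the conclusion \eqref{eq.th} of Theorem~\ref{thm.hoeffd} with this $s$: it reads
\begin{align*}
  \P\bigpar{\hX_N - \epsilon\,\E D^{h-1} \le n^{-1}\emb(H,G) \le \hX_N + \epsilon\,\E D^{h-1} + i_T\lambda} \ge 1-p,
\end{align*}
which is exactly the assertion of Corollary~\ref{cor.hoeffd}. So the whole proof is a one-parameter specialization.

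Honestly, there is no real obstacle: this corollary is a pure substitution into the preceding theorem, and the only thing requiring a moment's thought is the degenerate case $\E D^{h-1}=0$ (equivalently $G$ having no edges), which must either be excluded implicitly or handled in a sentence. The parenthetical remark before the corollary already motivates why this particular choice of $s$ is the natural one, so the proof can simply be: ``This is the special case $s=\epsilon\,\E D^{h-1}$ of Theorem~\ref{thm.hoeffd}; the hypothesis \eqref{eq.hoeffd.practical} is then exactly \eqref{eq.Nhoeffd}.''
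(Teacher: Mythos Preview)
Your proposal is correct and matches the paper's own argument exactly: the paper derives the corollary by choosing $s = \epsilon\,\E D^{h-1}$ in Theorem~\ref{thm.hoeffd}, which is precisely what you do. Your extra sentence handling the degenerate case $\E D^{h-1}=0$ is more than the paper bothers with, but harmless.
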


If
we are able to draw edges $K_2$, wedges $P_3$ or other small subgraphs in $G$ uniformly at random,
we can estimate certain small subgraph densities using a much smaller sample
size 
using the following generalization of \refT{thm.hoeffd}.
Other authors have used other methods to obtain some practical results in estimating densities of graphs $H$ on $h \le 5$ vertices
using algorithms which sample random paths on up to 5 vertices as their first step,
see, e.g., \cite[Section~4.3]{ribeiro2022}. 

To state the generalization, 
let now $O$ be a
non-empty subgraph of $H$.
Let, as above,  $T$ be a spanning tree of $H$.
We declare that a vertex $v \in T$ is \emph{$O$-internal}
if either $v \in T \setminus V(O)$ and $d_v^T \ge 2$,
or $v \in O$ and there is an edge $uv \in T$ with $u \not \in O$.
We let $i_T^O$ denote the number of $O$-internal vertices in $T$.
Note that 
for $h \ge 2$ 
if $O$ consists of a single vertex $o$, this agrees
with the previous definition.

Assume $\emb(O,G) \ge 1$ and let $\nu \in \Emb(O,G)$. 
Let $\bX^O(\nu)$ denote the number of embeddings $\sigma \in \Emb(H, G)$
such that $\nu$ is the restriction of $\sigma$ to $O$,
and $d_{\gs(u)}<\gD$ for all $O$-internal vertices $u$.
Let $\nu_1^*, \dots, \nu_N^*$ be independent, uniformly random elements of $\Emb(O, G)$.
Consider the estimate
\begin{align}
    \hX_N^O := \frac 1 N \bigpar{\bX^O(\nu_1^*) + \dots + \bX^O(\nu_N^*)}.
\end{align}
Similarly as above we have:
\begin{theorem}\label{thm.hoeffdO}
    With notation and assumptions as above and in \refT{thm.hoeffd}, 
including \eqref{eq.Dhtail},
assume also $\emb(O, G) \ge 1$ and,
instead of \eqref{eq.Nhoeffd},
    \begin{equation}\label{eq.NhoeffdO}
        N \ge \frac {(\gD - 1)^{2(h-|V(O)|)}} {2 s^2} \ln \frac 2 p.
    \end{equation}
    Then
    \begin{align}\label{eq.thO}
        \P\Bigpar{\hX_N^O - s \le \frac{\emb(H,G)} {\emb(O,G)} \le \hX_N^O + s + \frac {i_T^O \lambda n} {\emb(O, G)} } \ge 1-p.
    \end{align}
\end{theorem}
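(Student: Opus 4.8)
The plan is to mirror the proof of \refT{thm.hoeffd}, replacing the single root $o$ by the subgraph $O$ and the sum over vertices $v\in V(G)$ by a sum over embeddings $\nu\in\Emb(O,G)$. First I would write the decomposition analogous to \eqref{magn}:
\begin{align*}
  \emb(H,G)=\sum_{\nu\in\Emb(O,G)}\bX^O(\nu)
  +\biggabs{\bigcup_u\set{\gs\in\Emb(H,G):d_{\gs(u)}\ge\gD}},
\end{align*}
where the union runs over the $O$-internal vertices $u$ of $T$; this is exact because every $\sigma\in\Emb(H,G)$ restricts to a unique $\nu\in\Emb(O,G)$, and $\sigma$ is counted in $\bX^O(\nu)$ precisely when no $O$-internal vertex is mapped to a vertex of degree $\ge\gD$. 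The first term equals $\emb(O,G)\cdot\E\bX^O(\nu_1^*)$ since $\nu_1^*$ is uniform on $\Emb(O,G)$.

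Next I would bound the error term. By the union bound it suffices to bound, for each $O$-internal vertex $u$ of $T$, the number of $\sigma\in\Emb(H,G)$ with $d_{\sigma(u)}\ge\gD$, which is at most $\hom_\gD(H',G)$ where $H'$ is $H$ re-rooted at $u$ (dropping injectivity only loses us an upper bound, which is fine). Here the key point is that $u$ being $O$-internal guarantees that the connected component argument works: if $u\notin V(O)$ it has $T$-degree $\ge2$ so re-rooting at it keeps $H$ connected with $h$ vertices; if $u\in V(O)$ there is a $T$-edge $uv$ with $v\notin O$, so again $u$ is a genuine cut-type vertex and \refT{thm.homD} applies to the re-rooted tree (as in the proof of \refT{thm.hoeffd}, one applies \refT{thm.homD} to a re-rooting of $T$, using $\hom(H',G)\le\hom(T',G)$). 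Thus each term is at most $n\,\E\sqpar{D^{h-1}\indicx{D\ge\gD}}\le\lambda n$, and the whole error term is at most $i_T^O\lambda n$. Dividing through by $\emb(O,G)$ gives
\begin{align*}
  \E\bX^O(\nu_1^*)\le\frac{\emb(H,G)}{\emb(O,G)}\le\E\bX^O(\nu_1^*)+\frac{i_T^O\lambda n}{\emb(O,G)}.
\end{align*}

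Finally I would apply Hoeffding's inequality \eqref{hoeffding}. The bound $0\le\bX^O(\nu)\le(\gD-1)^{h-|V(O)|}$ holds because, given the fixed embedding $\nu$ of the $|V(O)|$ vertices of $O$, the remaining $h-|V(O)|$ vertices of $H$ are reached along $T$ through $O$-internal vertices whose images have degree $<\gD$, so each such vertex has at most $\gD-1$ choices; applying \eqref{hoeffding} to $X_i:=\bX^O(\nu_i^*)/(\gD-1)^{h-|V(O)|}$ with $t=s/(\gD-1)^{h-|V(O)|}$ and using \eqref{eq.NhoeffdO} gives the deviation of $\hX_N^O$ from its mean is $\le s$ with probability $\ge1-p$ (the case $\gD=1$ being handled as before by $\bX^O\equiv0$). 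Combining with the two-sided bound above yields \eqref{eq.thO}. The main obstacle is the middle step: verifying that the notion of $O$-internal vertex is exactly what makes \refT{thm.homD} applicable to each re-rooted tree, i.e.\ that the re-rooted object is still a connected graph on $h$ vertices so that the Sidorenko-type bound $d_v^{h-1}$ is valid — this is a bookkeeping argument about $T$ and $O$ rather than anything deep, but it is where care is needed.
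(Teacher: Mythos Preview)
Your proposal is correct and follows exactly the paper's approach: the paper's proof simply states the analogue of \eqref{eq.emb} (your displayed two-sided bound on $\emb(H,G)/\emb(O,G)$), notes $0\le\bX^O(\nu)\le(\gD-1)^{h-|V(O)|}$, and invokes Hoeffding. Your only misstep is the worry at the end: re-rooting $H$ (or $T$) at any vertex $u$ trivially leaves it connected on $h$ vertices, so \refT{thm.homD} applies with no extra hypothesis; the genuine role of ``$O$-internal'' is in making the decomposition exact and in justifying the bound $(\gD-1)^{h-|V(O)|}$, both of which you handled correctly.
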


\begin{proof}
    Using the same argument as for \eqref{eq.emb} we get by Theorem~\ref{thm.homD}:
    \begin{equation} \label{eq.embO}
        \emb(O, G)  \E \bX^O(\nu_1^*) \le  \emb(H, G) \le \emb(O, G)  \E \bX^O(\nu_1^*) +  i_T^O \lambda n.
    \end{equation}
    Note that $0 \le \bX^O(\nu) \le (\gD - 1)^{h - |V(O)|}$. To finish the proof, we again apply Hoeffding's inequality.
\end{proof}

 For a simple example, let $H=K_h$ and $T=O=K_{1,h-1}$; 
 then $|V(O)|=h$
and $i^O_T=0$. Thus the values of $\gD$ and $\gl$ are irrelevant; we take
$\gD = n$ and $\lambda=0$ so that \eqref{eq.Dhtail} holds.
\refT{thm.hoeffdO} shows that
$N \ge 2^{-1} s^{-2} \ln \frac 2 p$ suffices.
(In this simple case with $V(O)=V(H)$, 
this follows without invoking Theorem~\ref{thm.homD} in the proof.)

When we are only able to draw uniform embeddings of $O$ with $|V(O)| < h$, if (\ref{eq.Dhtail}) holds with non-trivial values of
$\gD$ and $\lambda$, we can apply Theorem~\ref{thm.hoeffdO}, for example, with $s=\epsilon \E D^{h-1}$ as in Corollary~\ref{cor.hoeffd}
not only to reduce $N$ but also to bound the steps needed to compute $\bX^O(\nu)$.

Finally, we note that Theorem~\ref{thm.homD} allows us to generalize (the
difficult part of) Theorem~2.1 of 
\cite{kurauskas2015}, with a simpler proof that does not require the local
weak convergence assumption of \cite{kurauskas2015}. 
\begin{theorem}
Let $H$ be a fixed connected graph on $h$ vertices, and pick an
arbitrary vertex $o \in H$ as its root. 
    Let $(G_n,n\in\{1,2,\dots,\})$ be a sequence of graphs.
Let $\vv_n$ be a uniformly random vertex from $V(G_n)$ and let
$D_n:=d_{\vv_n}$ be its degree in $G_n$. 

If\/ $X(H, G_n, \vv_n)$ converges in distribution to a random variable $X^*$ as \ntoo, and
$D_n^{h-1}$ is uniformly integrable, then 
\begin{align}\label{emma}
  |V(G_n)|^{-1} \emb(H, G_n) \to \E X^*.
\end{align}    
\end{theorem}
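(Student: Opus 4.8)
The plan is to deduce \eqref{emma} from the convergence in distribution of $X(H,G_n,\vv_n)$ by a truncation argument, using \refT{thm.homD} to control the truncated tail uniformly in $n$. First I would fix a spanning tree $T$ of $H$ rooted at $o$, and for each integer $\gD\ge1$ introduce the truncated quantity $\bX(H,G_n,\vv_n,T,\gD)$ from the discussion preceding \refT{thm.hoeffd}; write $\bar X_n^{(\gD)}$ for short. Since $\bX\le X$ pointwise and $0\le\bX\le(\gD-1)^{h-1}$, the bounded-ness gives, for each fixed $\gD$, that $\E\bar X_n^{(\gD)}\to\E\bar X^{*(\gD)}$ as \ntoo, where $\bar X^{*(\gD)}$ is the corresponding truncation of the limit $X^*$ — provided one checks that the truncation operation is compatible with convergence in distribution. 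This compatibility is the one genuinely delicate point and I address it below.

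Next I would estimate the gap between $X$ and its truncation. By \eqref{magn} (or rather its pointwise/vertex-wise version), for every vertex $v$,
\begin{equation*}
0\le X(H,G_n,v)-\bX(H,G_n,v,T,\gD)\le \sum_{u}\bigabs{\set{\sigma\in\Emb(H,G_n):\sigma(o)=v,\ d_{\sigma(u)}\ge\gD}},
\end{equation*}
the sum over internal vertices $u$ of $T$. Averaging over $v=\vv_n$ and applying \refT{thm.homD} to each tree obtained by rerooting $T$ at an internal vertex (exactly as in the proof of \refT{thm.hoeffd}), one gets
\begin{equation*}
0\le \E X(H,G_n,\vv_n)-\E\bar X_n^{(\gD)}\le i_T\,\E\bigsqpar{D_n^{h-1}\indicx{D_n\ge\gD}}.
\end{equation*}
By uniform integrability of $D_n^{h-1}$, the right-hand side tends to $0$ as \gD\to\infty, uniformly in $n$. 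Combining: for each $\gD$, $|V(G_n)|^{-1}\emb(H,G_n)=\E X(H,G_n,\vv_n)$ is within $i_T\sup_n\E[D_n^{h-1}\indicx{D_n\ge\gD}]$ of $\E\bar X_n^{(\gD)}$, which converges to $\E\bar X^{*(\gD)}$; letting $\gD\to\infty$ and using monotone convergence $\E\bar X^{*(\gD)}\uparrow\E X^*$ (note $X^*\ge0$) together with the uniform tail bound yields \eqref{emma} by the standard $\eps/3$ argument.

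The main obstacle is justifying that $\E\bar X_n^{(\gD)}\to\E\bar X^{*(\gD)}$, i.e.\ that truncating at degree $\gD$ is a continuous operation along the convergence $X(H,G_n,\vv_n)\dto X^*$. The subtlety is that $\bX$ is not a function of $X$ alone — it depends on the local structure around $\vv_n$, not merely on the count $X$. The clean fix is to strengthen the bookkeeping: observe that $X(H,G_n,\vv_n)$, being an integer-valued count of rooted embeddings, can be refined into the vector recording, for each subset $S$ of internal vertices of $T$, the number of embeddings $\sigma$ with $\sigma(o)=\vv_n$ for which $\{u\in S:d_{\sigma(u)}\ge\gD\}$ is exactly $S$; then $\bar X_n^{(\gD)}$ is the $S=\emptyset$ coordinate and $X$ is the sum of all coordinates. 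One should argue that convergence in distribution of the single count $X(H,G_n,\vv_n)$, in the presence of the uniform-integrability hypothesis, already forces convergence of these finer counts — or, more honestly, one invokes the hypothesis of \cite[Theorem~2.1]{kurauskas2015} in the form actually needed. Alternatively, and perhaps most cleanly, one avoids the issue entirely: note that $X(H,G_n,\vv_n)\dto X^*$ with $\E X(H,G_n,\vv_n)$ bounded (by $\E D_n^{h-1}$, itself bounded by uniform integrability) already gives, via Fatou, $\E X^*\le\liminf|V(G_n)|^{-1}\emb(H,G_n)$, and the reverse inequality $\limsup\le\E X^*$ is what the truncation plus \refT{thm.homD} deliver through the bound above applied with $X^*$ in place of its truncations where legitimate. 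I expect the write-up to settle on spelling out that the relevant convergence is of $\E f(X(H,G_n,\vv_n))$ for bounded continuous $f$ — in particular for $f(x)=x\wedge M$ — and then combining $X\wedge M$ with the degree-truncation bound, so that only the already-assumed distributional convergence of the scalar $X$ is used, with \refT{thm.homD} supplying the uniform error control that makes the two limits $M\to\infty$ and $\gD\to\infty$ interchangeable with \ntoo.
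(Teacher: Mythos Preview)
Your final approach --- Fatou for the lower bound, and for the upper bound the chain
\[
\E X_n \le \E\bar X_n^{(\gD)} + i_T\,\E\bigsqpar{D_n^{h-1}\indicx{D_n\ge\gD}}
\]
combined with passing to the limit in a bounded function of $X_n$ --- is exactly what the paper does. Your detour through a putative limit object $\bar X^{*(\gD)}$ is, as you yourself note, not well-posed (there is no graph behind $X^*$ in which to truncate by degree), and the paper never attempts it.

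The one place where the paper is crisper than your write-up is in linking your two parameters $M$ and $\gD$. The paper simply takes $M=(\gD-1)^{h-1}$ and records the single inequality
\[
\bar X_n^{(\gD)} \le X_n \land (\gD-1)^{h-1},
\]
which follows at once from the two facts $\bar X\le X$ and $\bar X\le(\gD-1)^{h-1}$ that you already stated. This yields $\E\bar X_n^{(\gD)}\le \E\bigsqpar{X_n\land(\gD-1)^{h-1}}$, and the right-hand side converges to $\E\bigsqpar{X^*\land(\gD-1)^{h-1}}\le\E X^*$ by the assumed distributional convergence of the scalar $X_n$ alone. So there is only one truncation parameter, not two, and no interchange-of-limits argument beyond letting $\gD\to\infty$ at the end.
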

\begin{proof}
    Write $X_n := X(H, G_n, \vv_n)$.
    Clearly 
    \begin{align}\label{ika}
        \E X_n = |V(G_n)|^{-1} \emb(H,G_n).
    \end{align}
Hence 
we need to prove that
$\E X_n \to \E X^*$.
Since $X_n \dto X^*$, we have $\E X^* \le \liminf_{n\to\infty} \E X_n$
by Fatou's lemma.

To show the opposite inequality,
fix a rooted spanning tree $T$ of $H$ 
and a positive integer $\gD$, and write $\bX_n := \bX(H, G_n, \vv_n, T, \gD)$.
By \eqref{ika} and \eqref{magn} (with $G_n$ instead of $G$),
\refT{thm.homD} implies, as  above for \eqref{eq.emb},
\begin{align}\label{aw}
  \E X_n 
\le \E \bX_n + i_T \E{}[D_n^{h-1}\indicx{D_n\ge\gD}].
\end{align}
Furthermore, by \eqref{bdd}, writing $x\land y:=\min(x,y)$,
\begin{align}\label{bw}
  \bX_n 
\le 
X_n\land (\gD-1)^{h-1}.
\end{align}
Since $X_n$ converges in distribution to $X^*$,
we have 
\begin{align}\label{cw}
  \lim_{n\to\infty} \E \xsqpar{X_n \land (\gD-1)^{h-1}}
= \E \xsqpar{X^* \land (\gD-1)^{h-1}}
\le \E X^*.
\end{align}
    Write 
$\epsilon_\gD := 
\limsup_{n\to\infty}\E\xsqpar{ D_n^{h-1} \indicx{D_n \ge \gD}}$. 
    Then, by \eqref{aw}--\eqref{cw} and $i_T\le h$,
    \begin{align}
\limsup_{n \to \infty} \E X_n 
\le \limsup_{n\to\infty} \E\bX_n + h\epsilon_\gD \le \E X^* + h\epsilon_\gD.
    \end{align}
This holds for every $\gD>0$, 
and $\lim_{\gD \to \infty} \epsilon_\gD = 0$ by the uniform
integrability assumption; thus $\limsup_{n \to \infty} \E X_n \le \E X^*$,
which 
completes the proof.
\end{proof}

\section{Concluding remarks}

We extended Sidorenko's inequality and used it to derive 
bounds on 
the sample size
needed to estimate
the number of small subgraphs in a large graph
using only the weak assumption (\ref{eq.Dhtail}).

Like Hoeffding's bound, our estimate works for worst case graphs;
therefore the lower bound from Theorem~\ref{thm.hoeffd}
may be too pessimistic for specific real-world graphs.
Nevertheless it would be interesting
to better understand if our results and assumptions of type (\ref{eq.Dhtail})
can be useful in practice.

It would also be interesting to find an interpretation or applications 
of the general case of our inequality \eqref{aalto}.

\newcommand\AAP{\emph{Adv. Appl. Probab.} }
\newcommand\JAP{\emph{J. Appl. Probab.} }
\newcommand\JAMS{\emph{J. \AMS} }
\newcommand\MAMS{\emph{Memoirs \AMS} }
\newcommand\PAMS{\emph{Proc. \AMS} }
\newcommand\TAMS{\emph{Trans. \AMS} }
\newcommand\AnnMS{\emph{Ann. Math. Statist.} }
\newcommand\AnnPr{\emph{Ann. Probab.} }
\newcommand\CPC{\emph{Combin. Probab. Comput.} }
\newcommand\JMAA{\emph{J. Math. Anal. Appl.} }
\newcommand\RSA{\emph{Random Structures Algorithms} }
\newcommand\DMTCS{\jour{Discr. Math. Theor. Comput. Sci.} }

\newcommand\AMS{Amer. Math. Soc.}
\newcommand\Springer{Springer-Verlag}
\newcommand\Wiley{Wiley}

\newcommand\vol{\textbf}
\newcommand\jour{\emph}
\newcommand\book{\emph}
\newcommand\inbook{\emph}
\def\no#1#2,{\unskip#2, no. #1,} 
\newcommand\toappear{\unskip, to appear}

\newcommand\arxiv[1]{\texttt{arXiv}:#1}
\newcommand\arXiv{\arxiv}

\newcommand\xand{and }
\renewcommand\xand{\& }

\def\nobibitem#1\par{}

\newpage

\appendix

\section{A note on real-world experiments}

During the preparation of this paper we tested Corollary~\ref{cor.hoeffd} with some real-world degree distributions. 
For our first experiment, we considered survey data on self-reported human contact count distributions during a single day in the USA in 4 COVID-19 pandemic waves \cite{feehanmahmud2021}. For our second experiment, we considered degree distributions of more than 500 empirical networks of various types and sizes made available as part of the supporting code of \cite{broidoclauset2019}.

Although the lower bounds in our first experiment
seem to be interesting for further exploration, we believe that the survey sizes in \cite{feehanmahmud2021} (several thousand respondents in each of the COVID-19 waves) were too small to determine if there exists a practically useful choice of $\lambda$ and $\gD$ in (\ref{eq.Dhtail}), even for $h=3$.
In the second case for 75\% of degree distributions we got a lower bound for $N$ exceeding the underlying network size. 

We believe that in the first case establishing a better understanding on the degree tails, simply collecting more data or applying the adaptive mean estimation methods \cite{dklr2000, empiricalbernstein} might help. In the second case, since the full network data is available, the methods mentioned in \cite{ribeiro2022} and Theorem~\ref{thm.hoeffdO} seem to be more suitable.

The code and the results of our experiments are available at
\\\url{https://github.com/valentas-kurauskas/subgraph-counts-hoeffding}.

\end{document}